\definecolor{dark-red}{rgb}{0.5,0.15,0.15}
\definecolor{dark-blue}{rgb}{0.15,0.15,0.6}
\definecolor{dark-green}{rgb}{0.15,0.6,0.15}
\newcommand{\euscr}[1]{\EuScript{#1}} 
\newcommand{\ccat}{\euscr{C}} 
\newcommand{\dcat}{\euscr{D}} 
\newcommand{\Ext}{\textnormal{Ext}} 
\newcommand{\map}{\textnormal{map}} 
\newcommand{\mfrak}{\mathfrak{m}} 
\newcommand{\spectra}{\euscr{S}p} 
\newcommand{\ComodE}{\euscr{C}omod_{E_{*}E}} 
\newcommand{\Mod}{\euscr{M}od} 
\newcommand{\monunit}{\mathbbm{1}} 
\newcommand{\Pic}{\mathrm{Pic}} 
\newcommand{\Picspace}{\mathcal{P}\mathrm{ic}}
\theoremstyle{plain}
\newtheorem{theorem}{Theorem}[section]
\newtheorem{lemma}[theorem]{Lemma}
\newtheorem{prop}[theorem]{Proposition}
\newtheorem*{theorem*}{Theorem}
\theoremstyle{definition}
\newtheorem{rem}[theorem]{Remark}
\newtheorem*{rem*}{Remark}
\newtheorem*{interpretation*}{Interpretation}
\newtheorem*{defin*}{Definition}
\newtheorem*{conjecture*}{Conjecture}
\newtheorem*{notation*}{Notation}
\newtheorem*{convention*}{Convention}
\newtheorem*{theorem_italics*}{Theorem}
\theoremstyle{remark}
  \def\subsection{\@startsection{subsection}{1}%
  \z@{.7\linespacing\@plus\linespacing}{.5\linespacing}%
  {\normalfont\bfseries\centering}}
\begin{document}

\title[Chromatic Picard groups at large primes]{Chromatic Picard groups at large primes}
\author[Piotr Pstr\k{a}gowski]{Piotr Pstr\k{a}gowski}
\address{Harvard University}
\email{pstragowski.piotr@gmail.com}

\begin{abstract}
We show that the Hopkins' Picard group of the $K(n)$-local category coincides with its algebraic approximation when $2p-2 > n^{2}+n$.
\end{abstract}

\maketitle 

\section{Introduction}

If $\ccat$ is a symmetric monoidal $\infty$-category, then we can consider equivalence classes of invertible objects $X$, that is, those such that there exists a $Y$ satisfying $X \otimes Y \simeq \monunit$. This is often a set, rather than a proper class, and it inherits a group multiplication induced from the tensor product. We call the resulting group the Picard group and denote it by $\Pic(\ccat)$. 

Following ideas of Hopkins, the study of Picard groups was brought into chromatic homotopy theory \cite{hopkins1994constructions}, \cite{strickland1992p}. In this context, $\ccat$ is usually taken to be the $\infty$-category of $E(n)$- or $K(n)$-local spectra at a fixed prime. 

 As a general rule, one expects the answers to be algebraic when the prime is large compared to the height. To explain what we mean, let us focus on the $E(n)$-local case first. In this context, taking rational homology defines a homomorphism 
 
 \begin{center}
 $H\mathbb{Q}_{*}: \Pic(\spectra_{E(n)}) \rightarrow \Pic(\mathbb{Q})$, 
 \end{center}
 where by the latter we denote the Picard group of graded rational vector spaces, which is isomorphic to $\mathbb{Z}$. This homomorphism is in fact a split surjection, with splitting $k \mapsto S^{k}_{E(n)}$. 
 
Then, it is a result of Hovey and Sadofsky that when $2p-2 > n^{2}+n$, the algebraic comparison map is an isomorphism, so that we have $\Pic(\spectra_{E(n)}) \simeq \mathbb{Z}$ \cite{hovey1999invertible}. This is in stark contrast with what happens at small primes; for example, we have $\textnormal{Pic}(\spectra_{E(1)}) \simeq \mathbb{Z} \oplus \mathbb{Z}/2$ at $p = 2$, and $\textnormal{Pic}(\spectra_{E(2)}) \simeq \mathbb{Z} \oplus \mathbb{Z}/3 \oplus \mathbb{Z}/3$ at $p = 3$ \cite{hovey1999invertible}, \cite{goerss2014hopkins}. 

To study the $K(n)$-local case, one needs a more subtle algebraic invariant. More precisely, we define the completed $E$-homology as 

\begin{center}
$E_{*}^{\vee} X := \pi_{*}L_{K(n)} (E \wedge X)$,
\end{center}
where $E$ is the Morava $E$-theory spectrum of height $n$. When it's finitely generated, $E_{*}^{\vee} X$ has a canonical structure of an $L$-complete comodule over $E_{*}^{\vee} E$ \cite{baker2009complete} \cite{barthel2016e2}[1.22]. The latter Hopf algebroid can be described explicitly as $E^{\vee}_{*}E \simeq \map^{c}(\mathbb{G}_{n}, E_{*})$, the space of continuous functions on the Morava stabilizer group, with structure maps induced from the action of $\mathbb{G}_{n}$ \cite{devinatz2004homotopy}.  

If $X$ is $K(n)$-locally invertible, then $E_{*}^{\vee} X$ is an invertible $E_{*}^{\vee}E$-comodule, which gives a homomorphism $\Pic(\spectra_{K(n)}) \rightarrow \Pic(E_{*}^{\vee} E)$ into the algebraic Picard group, given by isomorphisms classes of invertible comodules. 

The algebraic Picard group can be expressed in terms of cohomology of the Morava stabilizer group; to do so, one observes that an invertible $E_{*}^{\vee}E$-comodule is the same as an invertible $E_{*}$-module equipped with a compatible continuous action of $\mathbb{G}_{n}$. Since $E_{0}$ is a regular local ring, any such module is free of rank one, and so we have a short exact sequence 

\begin{center}
$0 \rightarrow \Pic^{0}(E_{*}^{\vee}E) \rightarrow \Pic(E_{*}^{\vee}E) \rightarrow \mathbb{Z}/2 \rightarrow 0$,
\end{center}
where $\Pic^{0}(E_{*}^{\vee}E)$ is the subgroup of those invertible modules which are concentrated in even degrees. Since $E_{*}$ is $2$-periodic, any such module is determined by its degree zero part, which yields an isomorphism $\Pic^{0}(E_{*}^{\vee}E) \simeq H_{c}^{1}(\mathbb{G}_{n}, E_{0}^{\times})$ by standard considerations \cite{goerss2014hopkins}. 

Due to a classical argument using the sparsity of the Adams-Novikov spectral sequence, one knows that $\Pic(\spectra_{K(n)}) \rightarrow \Pic(E_{*}^{\vee} E)$ is injective when $2p-2 > n^{2}$ and $(p-1) \nmid n$ \cite{hopkins1994constructions}[7.5]. On the other hand, surjectivity was not known except at low heights, where both sides can be computed explicitly. 

Our main result gives a range in which the comparison map is in fact an isomorphism. 

\begin{theorem}[\ref{thm:picard_groups_of_k_local_cat_algebraic_at_large_primes}]
\label{thm:introduction:topological_and_algebraic_picard_groups_isomorphic_for_2pminus2_larger_than_nsquared_n}
When $2p-2 > n^{2}+n$, $\Pic(\spectra_{K(n)}) \rightarrow \Pic(E_{*}^{\vee} E)$ is an isomorphism.
\end{theorem}
The proof of \cref{thm:introduction:topological_and_algebraic_picard_groups_isomorphic_for_2pminus2_larger_than_nsquared_n} rests on the recent chromatic algebraicity result of the author which states that at large primes, there exists an equivalence  $h \spectra _{E} \simeq h \dcat(E_{*}E)$ between the homotopy categories of $E$-local spectra and differential $E_{*}E$-comodules \cite{pstragowski_chromatic_homotopy_algebraic}.

In fact, to prove the isomorphism between Picard groups, we do not need the equivalence of homotopy categories, but only the weaker statement that any $E_{*}E$-comodule can be canonically realized as a homology of a certain $E$-local spectrum. Thus, \cref{thm:introduction:topological_and_algebraic_picard_groups_isomorphic_for_2pminus2_larger_than_nsquared_n} holds in a slightly larger range of primes than chromatic algebraicity. 

The arguments we use are quite general, and we believe could be applied in many other profinite contexts. In particular, a generalization to the case of $K(n)$-locally dualizable spectra appears in the work of Barthel, Heard and Naumann \cite{barthel2020conjectures}.

As was pointed to us by Paul Goerss, a more natural proof of \cref{thm:introduction:topological_and_algebraic_picard_groups_isomorphic_for_2pminus2_larger_than_nsquared_n} would use the descent spectral sequence for $S^{0}_{K(n)} \rightarrow E$, which is $K(n)$-local pro-Galois extension with Galois group $\mathbb{G}_{n}$ \cite{rognes2005galois}. No such spectral sequence was known at the time this article first appeared, but it has been since then constructed by Heard \cite[\S 6C]{heard2021spkn}. 

Since it is of potential interest, we present this alternative approach in \cref{rem:collapse_of_descent_spectral_sequence_at_large_primes}. The proof given in the main body of the article is independent from this argument.

\subsection{Acknowledgements}

I would like to thank my supervisor Paul Goerss for his support and guidance, as well as for helpful comments on the structure of this paper. 

\section{Chromatic Picard groups at large primes}

We let $p$ denote the prime and $n$ the height, both of which are fixed. By $E$ we denote the Morava $E$-theory spectrum, this is an even periodic Landweber exact spectrum associated to the Lubin-Tate ring $E_{0} \simeq W(\mathbb{F}_{p^{n}})[[u_{1}, \ldots, u_{n-1}]]$ of the Honda formal group law over $\mathbb{F}_{p^{n}}$, see for example \cite[\S1]{goerss2005resolution} for more on the setup. In particular, $E_{0}$ is a complete regular local ring of dimension $n$, with maximal ideal $\mfrak = (p, u_{1}, \ldots, u_{n-1})$.

The completion functor $M \rightarrow \varprojlim M / \mfrak^{k} M$ on $E_{*}$-modules is neither right or left exact, but it has a right exact left derived functor which we denote by $L_{0}$ \cite[Appendix A]{hovey1999morava}. We say a module $M$ is $L$-complete if the natural map $M \rightarrow L_{0}M$ is an isomorphism. If $M, N$ are modules, then we denote their $L$-complete tensor product by $M \widehat{\otimes} _{E_{*}} N := L_{0}(M \otimes _{E_{*}} N)$. 

We let $K$ denote the associated Morava $K$-theory spectrum; this is the unique up to equivalence $E$-module which admits an $E$-algebra structure whose unit induces an isomorphism $K_{*} \simeq E_{*} / \mfrak$ \cite{lurie_hopkins_brauer_group}. The spectrum $K$ is Bousfield equivalent to the classical Morava $K$-theory spectrum $K(n)$ satisfying $K(n)_{*} \simeq \mathbb{F}_{p}[v_{n}^{\pm 1}]$. 

Following \cite{hopkins1994constructions}, \cite{strickland1992p}, for a spectrum $X$ we define its completed $E$-homology as 

\begin{center}
$E_{*}^{\vee} X := \pi_{*}L_{K} (E \wedge X)$.
\end{center}
One can show that $E_{*}^{\vee}X$ is an $L$-complete $E_{*}$-module, and if it is finitely generated, then it has a structure of a comodule over $E_{*}^{\vee}E$ \cite[8.5]{hovey1999morava}, \cite{baker2009complete}, \cite[1.22]{barthel2016e2}. If $X$ is finite, or more generally if $E_{*}X$ is $L$-complete, then $E^{\vee}_{*}X \simeq E_{*}X$  \cite[3.2]{hovey2004some}.

We would like to understand the Picard group $\Pic(\spectra_{K})$ of equivalence classes of invertible $K$-local spectra. We begin by recalling the following fundamental result. 

\begin{theorem}[\cite{hopkins1994constructions}(1.3)]
\label{thm:invertibility_detected_by_completed_e_homology}
A spectrum $X$ is $K$-locally invertible if and only if $E_{*}^{\vee }X$ is free of rank one over $E_{*}$; equivalently, is an invertible $E_{*}^{\vee}E$-comodule. 
\end{theorem}
As a consequence of \cref{thm:invertibility_detected_by_completed_e_homology}, we obtain a homomorphism $E_{*}^{\vee}: \Pic(\spectra_{K}) \rightarrow Pic(E_{*}^\vee E)$ from the $K$-local Picard group into the Picard group of $E_{*}^{\vee}E$, given by isomorphisms classes of $E_{*}^{\vee}E$-comodules which are invertible under the tensor product. One can describe $E_{*}^{\vee} E$ and the associated Picard group in terms of the Morava stabilizer group, which we now recall. 

Since $E$ is even periodic, it is complex orientable and the associated formal group is the universal deformation of the Honda formal group law $\Gamma$ of height $n$ over $\mathbb{F}_{p^{n}}$. This endows $E_{0}$ with an action of the Morava stabilizer group $\mathbb{G}_{n} := \textnormal{Aut}(\mathbb{F}_{p^{n}}, \Gamma) \simeq \textnormal{Aut}(\Gamma) \rtimes \textnormal{Gal}(\mathbb{F}_{p^{n}} / \mathbb{F}_{p})$, which by the Goerss-Hopkins-Miller theorem lifts to an action on $E$ by maps of commutative ring spectra \cite{moduli_problems_for_structured_ring_spectra},  \cite{pstrkagowski2021abstract}. 

The action of $\mathbb{G}_{n}$ on $E$ induces an isomorphism $E_{*}^{\vee}E \simeq \map_{c}(\mathbb{G}_{n}, E_{*})$, where the latter is the space of continuous functions on the Morava stabilizer group \cite{devinatz2004homotopy}. If $M$ is an $E_{*}^{\vee} E$-comodule, then this identification endows it with an action of $\mathbb{G}_{n}$, and if $M$ is finitely generated over $E_{*}$, then this action is continuous in the $\mfrak$-adic topology and any such continuous action determines a comodule structure \cite[5.4]{barthel2016e2}. 

We deduce that the data of an invertible $E_{*}^{\vee} E$-comodule is the same as that of a an invertible $E_{*}$-module equipped with a compatible continuous action of $\mathbb{G}_{n}$, this allows one to give a homological description of $\Pic(E_{*}^{\vee} E)$, as we recalled in the introduction.

Our goal is to prove that the homomorphism $\Pic(\spectra_{K}) \rightarrow \Pic(E_{*}^{\vee} E)$ is an isomorphism at large primes. We start with injectivity, which is classical, but since the proof is enlightening, and not particularly difficult, we briefly recall the argument. 

\begin{prop}[\cite{hopkins1994constructions}(7.5)]
\label{prop:comparison_map_injective_at_large_primes}
If $2p-2 \geq n^{2}$ and $(p-1) \nmid n$, then the comparison map $E_{*}^{\vee}: \Pic(\spectra_{K}) \rightarrow \Pic(E_{*}^{\vee} E)$ is injective. 
\end{prop}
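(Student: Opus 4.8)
The plan is to show that the kernel of $E_{*}^{\vee}$ is trivial by exploiting the sparsity of the $K(n)$-local $E$-based Adams spectral sequence (the descent/$\mathbb{G}_{n}$-homotopy fixed point spectral sequence). Suppose $X$ is a $K$-local spectrum with $E_{*}^{\vee}X \simeq E_{*}^{\vee}S^{0} = E_{*}$ as $E_{*}^{\vee}E$-comodules; we want to conclude $X \simeq S^{0}_{K}$. First I would recall that for any invertible $K$-local $X$, the spectral sequence
\begin{center}
$H_{c}^{s}(\mathbb{G}_{n}, (E_{*}^{\vee}X)_{t}) \Rightarrow \pi_{t-s}L_{K}X$
\end{center}
computes the homotopy of $X$, and that under our hypothesis $E_{*}^{\vee}X \simeq E_{*}$ the $E_{2}$-page is just $H_{c}^{s}(\mathbb{G}_{n}, E_{t})$, identical to that of the sphere. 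The key input is the horizontal vanishing line: the cohomological dimension of $\mathbb{G}_{n}$ (modulo the finite subgroup issue, which is why one needs $(p-1)\nmid n$, so that $\mathbb{G}_{n}$ has no $p$-torsion and hence finite cohomological dimension $n^{2}$) forces $H_{c}^{s} = 0$ for $s > n^{2}$. Combined with the fact that $E_{t}$ is nonzero only for $t$ even, one gets that the group $\pi_{0}L_{K}X$ receives contributions only from $H_{c}^{s}(\mathbb{G}_{n}, E_{s})$ for $s$ even with $0 \le s \le n^{2}$, and the potential differentials in and out of the relevant diagonal are constrained.

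The heart of the argument is the following: a generator of $\pi_{0}L_{K}X \cong \pi_{0}L_{K}S^{0} = \mathbb{Z}_{p}$ in filtration zero corresponds, on $E_{0}$-pages, to a chosen isomorphism $E_{*}^{\vee}X \simeq E_{*}$, and one wants a map $S^{0}_{K} \to X$ realizing it. Concretely, I would pick an element $f \in \pi_{0}L_{K}X$ of Adams filtration $0$ which maps to a unit in $E_{0}^{\vee}X/\mathfrak{m} \cong E_{0}/\mathfrak{m}$; such an $f$ exists precisely because the edge homomorphism $\pi_{0}L_{K}X \to H_{c}^{0}(\mathbb{G}_{n}, E_{0}) = (E_{0})^{\mathbb{G}_{n}} = \mathbb{Z}_{p}$ hits a unit (here one uses that $H_{c}^{0}(\mathbb{G}_{n}, E_{0}) = \mathbb{Z}_p$ and that the comparison $\pi_0 L_K X \to H^0_c$ is the filtration-$0$ edge map, surjective onto permanent cycles, and every class in $H^0_c = \mathbb{Z}_p$ is a permanent cycle since nothing can map into filtration $0$). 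Then $f$ induces a map $\phi\colon S^{0}_{K} \to X$ with $E_{*}^{\vee}\phi$ an isomorphism (it is an isomorphism modulo $\mathfrak{m}$ between free rank-one $E_{*}$-modules which are $\mathfrak{m}$-complete, hence an isomorphism by Nakayama/completeness). Since $E_{*}^{\vee}$ detects equivalences of $K$-local spectra — a map inducing an iso on $E_{*}^{\vee}(-)$ is a $K$-local equivalence, because $E$ and $K$ are Bousfield equivalent after $K$-localization in the relevant sense, i.e. a $K_\ast$-iso is a $K$-local equivalence and $E^\vee_\ast$-iso implies $K_\ast$-iso — we conclude $\phi$ is an equivalence, so $X \simeq S^{0}_{K}$ and the kernel is trivial.

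The main obstacle is justifying that the edge map $\pi_{0}L_{K}X \to H_{c}^{0}(\mathbb{G}_{n}, E_{0})$ actually hits a unit, i.e. that there is no room for the target class in filtration $0$ to be killed or to fail to lift; this is exactly where the sparsity hypothesis $2p-2 \ge n^{2}$ enters, ensuring that the differentials $d_{r}$ for $r \ge 2$ have source and target in degrees forced apart by the even concentration of $E_{*}$ together with the bound $s \le n^{2}$ on cohomological degree, so that on the relevant stem no differential can hit the filtration-zero line and no differential originates from it into the target of the next relevant class. I would spell this out by the standard picture: a $d_r$ differential changes $(t-s, s)$ by $(-1, r)$; an incoming differential to $(0,0)$ would come from $(1, -r+1)$ which is empty, and classes in $H_c^0(\mathbb{G}_n, E_0)$ support no nonzero differentials because on the $0$-stem the next nonvanishing $E_2$ term above filtration $0$ sits in filtration $\ge 2p-2 > n^2 \ge$ the cohomological dimension, hence the whole column $t - s = 0$ above filtration $0$ vanishes and $H^0_c$ consists of permanent cycles detecting all of $\pi_0 L_K X / (\text{higher filtration})$ — and since $\pi_0 L_K X$ is $\mathfrak{m}$-complete with associated graded a sub quotient of $\bigoplus_s H^s_c(\mathbb{G}_n, E_s)$, the filtration-$0$ quotient being $\mathbb{Z}_p$ already gives the desired unit after reduction mod $\mathfrak{m}$. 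Everything else — identifying $E_2$-pages, Nakayama, $E_*^\vee$ detecting equivalences — is routine once this vanishing is in place.
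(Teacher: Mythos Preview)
Your approach is essentially the same as the paper's: both run the $K$-local $E$-based Adams spectral sequence for $X$, use sparsity together with the vanishing line at $s=n^{2}$ to see that the generator of $H_{c}^{0}(\mathbb{G}_{n},E_{0})$ survives, and then lift it to a map $S^{0}_{K}\to X$ which is an $E_{*}^{\vee}$-isomorphism and hence an equivalence. The paper phrases this more efficiently by observing that the entire spectral sequence collapses at $E_{2}$, so the chosen comodule isomorphism is automatically an infinite cycle.

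Two points in your write-up need tightening. First, the relevant sparsity is not merely that $E_{t}=0$ for $t$ odd, but that $H_{c}^{s}(\mathbb{G}_{n},E_{t})=0$ unless $(2p-2)\mid t$; you invoke this implicitly when you say the next nonvanishing term sits in filtration $\geq 2p-2$, but you should state it. Second, your justification for why classes in $H_{c}^{0}$ are permanent cycles is misdirected: you analyse the column $t-s=0$, but a differential $d_{r}$ out of $(s,t)=(0,0)$ lands at $(r,r-1)$, which lies in the column $t-s=-1$. The correct check is that $H_{c}^{r}(\mathbb{G}_{n},E_{r-1})\neq 0$ forces $(2p-2)\mid(r-1)$ and $r\leq n^{2}$, which is impossible when $2p-2\geq n^{2}$. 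Your $0$-stem analysis does show there are no higher-filtration contributions to $\pi_{0}X$, which is useful for identifying $\pi_{0}X$ with $H_{c}^{0}$, but it does not by itself yield the permanent-cycle claim. Once both facts are in hand your Nakayama argument goes through; alternatively, as in the paper, both follow at once from the global collapse.
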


\begin{proof}
Suppose that $X \in \Pic(\spectra_{K})$; since $E_{*}^{\vee} X$ is free of rank one, in particular finitely generated, we have the $K$-local $E$-based Adams spectral sequence of the form
\begin{center}
$\widehat{\Ext}^{s, t}_{E_{*}^{\vee} E}(E_{*}, E_{*}^{\vee} X) \Rightarrow \pi_{t-s} X$
\end{center}
and an isomorphism $\Ext^{s,t}_{E_{*}^{\vee}E}(E_{*}, E_{*}^{\vee}X) \simeq H_{c}^{s}(\mathbb{G}_{n}, E_{t}^{\vee} X)$ between the $E_{2}$-term and the continuous cohomology of the Morava stabilizer group \cite[3.1, 4.1]{barthel2016e2}. 

The $E_{2}$-term is concentrated in internal degrees divisible by $2p-2$ and if $(p-1) \nmid n$, then it has a horizontal vanishing line at $n^{2}$, the homological dimension of the Morava stabilizer group \cite[4.2.1]{heard2015morava}. It follows that under the given assumptions the spectral sequence collapses for degree reasons.

Now, suppose that $X$ is in the kernel of $E_{*}^{\vee}: \Pic(\spectra_{K}) \rightarrow \Pic(E_{*}^{\vee}E)$, so that we have an isomorphism $E_{*}^{\vee} X \simeq E_{*}^{\vee} S_{K}^{0} \simeq E_{*}$. As observed above, the $E$-based Adams spectral sequence collapses, and it follows that the chosen isomorphism is necessarily an infinite cycle and so descends to an equivalence $X \simeq S^{0}_{K}$. This ends the argument. 
\end{proof}
We move on to the surjectivity of the comparison map; this is the heart of the problem. We start with two short, technical lemmas.

\begin{lemma}
\label{lemma:local_homology_of_residue_field}
We have $\varinjlim \Ext_{E_{*}}(E_{*} / \mfrak^{k}, K_{*}) \simeq K_{*}$, concentrated in homological degree zero. 
\end{lemma}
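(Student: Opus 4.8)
The plan is to recognise this colimit as the local cohomology $H^{*}_{\mfrak}(K_{*})$ of $K_{*}$ as an $E_{*}$-module and to compute it directly from a Koszul resolution. Since $\mfrak$ is generated by the $n$ elements $p, u_{1}, \ldots, u_{n-1}$, the ideals $I_{k} := (p^{k}, u_{1}^{k}, \ldots, u_{n-1}^{k})$ satisfy $\mfrak^{nk} \subseteq I_{k} \subseteq \mfrak^{k}$, the first inclusion because any monomial of degree $nk$ in $n$ variables involves some variable to a power at least $k$. Hence the inverse system $\{E_{*}/I_{k}\}$ is cofinal among $\{E_{*}/\mfrak^{k}\}$, and it is enough to prove that $\varinjlim_{k} \Ext^{i}_{E_{*}}(E_{*}/I_{k}, K_{*})$ equals $K_{*}$ for $i = 0$ and vanishes for $i > 0$.

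Write $x_{1}, \ldots, x_{n}$ for the generators $p, u_{1}, \ldots, u_{n-1}$. Since $E_{0}$ is a regular local ring with maximal ideal $\mfrak$ and $E_{*} \simeq E_{0}[u^{\pm 1}]$ is faithfully flat over it, the sequence $x_{1}, \ldots, x_{n}$ is regular in $E_{*}$, and therefore so is $x_{1}^{k}, \ldots, x_{n}^{k}$; thus the Koszul complex on $(x_{1}^{k}, \ldots, x_{n}^{k})$ is a free resolution of $E_{*}/I_{k}$. Applying $\Hom_{E_{*}}(-, K_{*})$ gives the Koszul cochain complex, whose differential is built out of multiplication by the $x_{j}^{k}$; but each $x_{j}$ lies in $\mfrak$ and hence acts by zero on $K_{*} \simeq E_{*}/\mfrak$, so the differential vanishes and we obtain a natural identification $\Ext^{i}_{E_{*}}(E_{*}/I_{k}, K_{*}) \simeq \bigoplus_{|S| = i} K_{*}$, the sum over $i$-element subsets $S \subseteq \{1, \ldots, n\}$. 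The surjection $E_{*}/I_{k+1} \to E_{*}/I_{k}$ is covered by the chain map of Koszul complexes which on the $j$-th tensor factor $[E_{*} \xrightarrow{x_{j}^{k+1}} E_{*}] \to [E_{*} \xrightarrow{x_{j}^{k}} E_{*}]$ is the identity in degree zero and multiplication by $x_{j}$ in degree one; dualising, the resulting transition map on the summand of $\Ext^{i}$ indexed by $S$ is multiplication by $\prod_{j \in S} x_{j}$. For $i = 0$ the set $S$ is empty and this is the identity, so the colimit of that summand is $K_{*}$; for $i \geq 1$ the element $\prod_{j \in S} x_{j}$ lies in $\mfrak$, acts by zero on $K_{*}$, and the colimit vanishes. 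This yields $\varinjlim_{k} \Ext^{*}_{E_{*}}(E_{*}/I_{k}, K_{*}) \simeq K_{*}$ concentrated in homological degree zero, which is the claim.

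The one point requiring care is the identification of the transition maps — specifically, that the comparison between successive Koszul resolutions is multiplication by $x_{j}$ (rather than by $x_{j}^{k}$ or the identity) in the relevant spot, as this is precisely what makes the positive-degree colimits collapse; the rest of the argument is formal. An alternative, slightly shorter route is to pass along the faithfully flat map $E_{0} \to E_{*}$ and reduce to computing $\varinjlim_{k} \Ext^{*}_{E_{0}}(E_{0}/\mfrak^{k}, K_{*}) = H^{*}_{\mfrak}(K_{*})$ over the Noetherian local ring $E_{0}$; since $K_{*}$ is annihilated by $\mfrak$, hence $\mfrak$-power torsion, its local cohomology is concentrated in degree zero where it equals $K_{*}$, by Grothendieck's vanishing theorem.
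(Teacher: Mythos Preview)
Your proof is correct and takes a genuinely different route from the paper's. The paper reduces, via $2$-periodicity, to the ungraded statement over $E_{0}$, then invokes local duality for the regular local ring $E_{0}$ to rewrite $\varinjlim \Ext_{E_{0}}^{i}(E_{0}/\mfrak^{k}, K_{0})$ as the Matlis dual of $\Ext_{E_{0}}^{n-i}(K_{0}, E_{0})$, and finally computes $\Ext_{E_{0}}^{*}(E_{0}/I_{k}, E_{0})$ for the partial ideals $I_{k}=(p,u_{1},\ldots,u_{k-1})$ by induction on $k$ using the short exact sequences $0 \to E_{0}/I_{k-1} \to E_{0}/I_{k-1} \to E_{0}/I_{k} \to 0$. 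By contrast you stay on the ``colimit'' side throughout: you pass to the cofinal system $(p^{k},u_{1}^{k},\ldots,u_{n-1}^{k})$, use the Koszul resolution explicitly, and pin down the transition maps as multiplication by $\prod_{j\in S} x_{j}$, which kills positive-degree summands in the colimit. Your approach is more elementary in that it avoids local/Matlis duality and makes the vanishing completely transparent; the paper's approach trades that explicitness for a cleaner structural identification (and its inductive step also immediately yields $\Ext_{E_{0}}^{*}(K_{0},E_{0})\simeq K_{0}[n]$, which is of independent interest). Your closing remark about Grothendieck vanishing for the $\mfrak$-torsion module $K_{*}$ is also valid and is perhaps the fastest argument of all.
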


\begin{proof}
Since $E_{*}$ is $2$-periodic and the above modules are even graded, it is enough to prove that $\varinjlim \Ext_{E_{0}}(E_{0} / \mfrak^{k}, K_{0}) \simeq K_{0}$, concentrated in homological degree zero. Since $E_{0}$ is a regular local ring, local duality implies that $\varinjlim \Ext_{E_{0}}^{i}(E_{0} / \mfrak^{k}, K_{0}) \simeq \Ext_{E_{0}}^{n-i}(K_{0}, E_{0})^{\vee}$, where by $(-)^{\vee}$ we denote the Matlis dual \cite{bruns1998cohen}. Because $K_{0} \simeq E_{0} / \mfrak$ is the unique simple $E_{0}$-module, it is Matlis self-dual and we deduce that it is enough to show that $\Ext_{E_{0}}(K_{0}, E_{0}) \simeq K_{0}$, concentrated in homological degree $n$. 

More generally, we claim that $\Ext_{E_{0}}(E_{0} / I_{k}, E_{0}) \simeq E_{0} / I_{k}$, concentrated in homological degree $k$, where $I_{k} = (p, u_{1}, \ldots, u_{k-1})$ for any $0 \leq k \leq n$. This is clear for $k = 0$ and the general case follows by induction from the long exact sequence of $\Ext$-groups associated to 

\begin{center}
$0 \rightarrow E_{0} / I_{k-1} \rightarrow E_{0} / I_{k-1} \rightarrow E_{0} / I_{k} \rightarrow 0$,
\end{center}
which ends the proof. 
\end{proof}

\begin{lemma}
\label{lemma:k_local_limits_of_finite_spectra_stable_under_smashing}
Let $X \simeq \varprojlim X_{i}$ be a limit diagram of $K$-local spectra such that $X_{i}$ and $X$ are $K$-locally dualizable. Then, for any $K$-local spectrum $Y$ we have $L_{K}(Y \wedge X) \simeq \varprojlim L_{K}(Y \wedge X_{i})$. 
\end{lemma}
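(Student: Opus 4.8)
The plan is to use $K$-local Spanier--Whitehead duality to turn the asserted limit into a colimit, and then to kill that colimit by a Morava $K$-theory calculation. Since $L_K(Y \wedge -)$ is exact it preserves finite limits, and every limit is a cofiltered limit of finite limits of the $X_i$ — with the intermediate finite limits still $K$-locally dualizable, the dualizable objects being closed under finite limits — so I may assume the diagram is cofiltered, and I will run the argument for a tower $X \simeq \varprojlim(\cdots \to X_1 \to X_0)$, the general cofiltered case being identical with the Milnor sequence replaced by the $\varprojlim^s$-spectral sequence.

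Writing $Z^\vee := F(Z, S^{0}_{K})$, for dualizable $Z$ one has the identity $L_K(Y \wedge Z) \simeq F(Z^\vee, Y)$ (tensor--hom duality in the closed symmetric monoidal category $\spectra_K$). Since $F(-,Y)$ sends colimits to limits, $\varprojlim_i L_K(Y \wedge X_i) \simeq \varprojlim_i F(X_i^\vee, Y) \simeq F(\varinjlim_i X_i^\vee, Y)$, so the lemma reduces to the claim that the natural map $\varinjlim_i X_i^\vee \to X^\vee$ is an equivalence. Setting $F_i := \mathrm{fib}(X \to X_i)$ — dualizable, as a finite limit of dualizables — and dualizing the cofibre sequences $F_i \to X \to X_i$ yields cofibre sequences $X_i^\vee \to X^\vee \to F_i^\vee$; since the colimit runs over a weakly contractible diagram, $\mathrm{cofib}(\varinjlim_i X_i^\vee \to X^\vee) \simeq \varinjlim_i F_i^\vee$. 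Moreover $\varprojlim_i F_i \simeq \mathrm{fib}(X \xrightarrow{\ \sim\ } \varprojlim_i X_i) \simeq 0$. So it is enough to show
\[
\varinjlim_i F_i^\vee \simeq 0 \qquad\text{whenever each } F_i \text{ is dualizable and } \varprojlim_i F_i \simeq 0.
\]

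To prove this I would first observe that the $K$-local Spanier--Whitehead dual of $\varinjlim_i F_i^\vee$ vanishes: $\big(\varinjlim_i F_i^\vee\big)^\vee \simeq \varprojlim_i F_i^{\vee\vee} \simeq \varprojlim_i F_i \simeq 0$, using dualizability of the $F_i$. It then remains to see that $\varinjlim_i F_i^\vee$ itself is zero, for which I detect $K$-acyclicity. For dualizable $F_i$ one has $K \wedge F_i^\vee \simeq F(F_i, K)$, with $\pi_* F(F_i, K) \cong \mathrm{Hom}_{K_*}(K_* F_i, K_*)$ a finite-dimensional graded $K_*$-vector space; since $K \wedge (-)$ commutes with colimits, $K_*\big(\varinjlim_i F_i^\vee\big) \cong \varinjlim_i \mathrm{Hom}_{K_*}(K_* F_i, K_*)$, the filtered colimit of the $K_*$-linear duals of the tower $\{K_* F_i\}$. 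A tower of finite-dimensional vector spaces is automatically Mittag--Leffler, so as soon as $\varprojlim_i K_* F_i = 0$ the tower is pro-zero and this colimit of duals vanishes; hence $\varinjlim_i F_i^\vee$ is $K$-acyclic, so zero in $\spectra_K$, completing the proof.

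The step I expect to be the real obstacle is precisely the input $\varprojlim_i K_* F_i = 0$: Morava $K$-theory does not commute with inverse limits, so this cannot be read off naively from $\varprojlim_i F_i \simeq 0$. The way around is to exploit once more that the $F_i$ are dualizable, by smashing the tower with a dualizable spectrum that sees $K$-acyclicity: for a finite complex $M$ of type $n$ one has $\varprojlim_i (L_K M \wedge F_i) \simeq (L_K M)\otimes_{\spectra_K} \varprojlim_i F_i \simeq 0$, since $L_K M$ is dualizable and so $(L_K M)\otimes_{\spectra_K}(-)$ preserves limits, and one then extracts the required vanishing from this — equivalently, the whole last step is subsumed by the conservativity of $K$-local Spanier--Whitehead duality, which would let one pass directly from $\big(\varinjlim_i F_i^\vee\big)^\vee \simeq 0$ to $\varinjlim_i F_i^\vee \simeq 0$. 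Apart from this point, everything is formal manipulation with duality in $\spectra_K$ together with the standard fact that a $K$-local spectrum vanishes once its Morava $K$-homology does.
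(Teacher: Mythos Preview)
Your argument is correct, but it takes a far more laborious route than the paper's. The paper never touches the limit diagram: it fixes $X$ and the $X_{i}$ and instead varies $Y$. Because $X$ and each $X_{i}$ are dualizable, the functors $L_{K}(-\wedge X)$ and $L_{K}(-\wedge X_{i})$ preserve all $K$-local limits, so the class of $Y$ for which the conclusion holds is itself closed under limits; it contains $S^{0}_{K}$ tautologically, and then \cite{hovey1999morava}[7.5] (to the effect that the colocalizing subcategory of $\spectra_{K}$ generated by $L_{K}S^{0}$ is everything) finishes the proof in three lines. Your approach instead dualizes the tower, reduces to showing $\varinjlim_{i} F_{i}^{\vee}\simeq 0$, and detects this on Morava $K$-homology. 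It is worth noting that the two arguments rest on the same structural input: your ``conservativity of $K$-local Spanier--Whitehead duality'' is precisely the statement that $L_{K}S^{0}$ cogenerates $\spectra_{K}$, i.e.\ exactly what the paper cites. If you want your argument to genuinely avoid that input, your other suggestion works, but the cleanest choice of ``dualizable spectrum that sees $K$-acyclicity'' is $K$ itself --- it is $K$-locally dualizable since $K_{*}K$ is finite over $K_{*}$ --- which gives $\varprojlim_{i}(K\wedge F_{i})\simeq K\wedge\varprojlim_{i}F_{i}\simeq 0$ and hence $\varprojlim_{i}K_{*}F_{i}=0$ directly; with a type~$n$ finite complex $M$ you only get $\varprojlim_{i}\pi_{*}(M\wedge F_{i})=0$, and one then still needs to pass from $M$ to $K$ (e.g.\ via the fact that $K$ lies in the thick subcategory of dualizables generated by $L_{K}M$) to extract the statement about $K_{*}$.
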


\begin{proof}
Consider the collection of all $K$-local spectra $Y$ such that the needed condition holds. Since $X$ and $X_{i}$ are dualizable, smashing with them preserves $K$-local limits and we deduce that this collection is closed under limits. Since it also contains $S^{0}_{K}$ by assumption, we deduce that it is necessarily all of $\spectra_{K}$ by \cite[7.5]{hovey1999morava}.
\end{proof}
The following is the main result of this note. 

\begin{theorem}
\label{thm:picard_groups_of_k_local_cat_algebraic_at_large_primes}
Let $2p-2 > n^{2}+n$. Then, $E_{*}^{\vee}: \Pic(\spectra_{K}) \rightarrow \Pic(E_{*}^{\vee}E)$ is an isomorphism. 
\end{theorem}

\begin{proof}
If $2p-2 > n^{2}+n$, then $2p-2 \geq n^{2}$ and $(p-1) \nmid n$ and we've seen in \cref{prop:comparison_map_injective_at_large_primes} that under these conditions the homomorphism between Picard groups is injective. 

To verify surjectivity, we have to prove that if $M \in \Pic(E_{*}^{\vee}E)$, there exists a $K$-locally invertible spectrum $X$ with $E_{*}^{\vee} X \simeq M$. Observe that as an $E_{*}$-module, $M$ is necessarily free of rank one \cite[A.9]{hovey1999morava} and, without loss of generality, we can assume that it is even graded. Then, for each $k \geq 1$ we have $M / \mfrak^{k} M \simeq E_{*} / \mfrak^{k}$ as an $E_{*}$-module and so 

\begin{center}
$E_{*}^{\vee} E \widehat{\otimes}_{E_{*}} M / \mfrak^{k} M \simeq E_{*}E \widehat{\otimes} _{E_{*}} M / \mfrak^{k} M \simeq  E_{*}E \otimes _{E_{*}} M / \mfrak^{k} M$,
\end{center}
where the first isomorphism is \cite[A.7]{hovey1999morava} and the second follows from the fact that the last term is an $E_{*} / \mfrak^{k}$-module and so is already $L$-complete. Thus, we deduce that $M / \mfrak^{k} M$ is an $E_{*}E$-comodule in the usual, non-complete sense. 

Under the assumption $2p-2 > n^{2}+n$, in \cite[2.14]{pstragowski_chromatic_homotopy_algebraic} we construct the Bousfield splitting functor $\beta: \ComodE \rightarrow h \spectra_{E}$ valued in the homotopy category of $E$-local spectra with the property that $E_{*} \beta M \simeq M$ for any $M \in \ComodE$.

By construction, we have $E_{*} \beta (M / \mfrak^{k} M) \simeq M / \mfrak^{k} M$ and since the latter is $L$-complete, we deduce from \cite[3.2]{hovey2004some} that 
\[
E_{*}^{\vee} \beta(M / \mfrak^{k} M) \simeq E_{*} \beta (M / \mfrak^{k} M) \simeq M / \mfrak^{k} M.
\]
Note that we have a universal coefficient spectral sequence of signature
\begin{equation}
\label{equation:universal_coefficient_of_spectral_sequence}
\Ext_{E_{*}}(M/ \mfrak^{k} M, K_{*}) \Rightarrow K^{*} \beta (M / \mfrak^{k} M).
\end{equation}
As $E_{0}$ is a regular local ring of dimension $n$, this has a horizontal vanishing line at $s = n$ and in particular is strongly convergent; this will be important below. 

We let $X_{k} := L_{K} \beta(M / \mfrak^{k} M)$; by the above, this is a $K$-local spectrum with $E_{*}^{\vee} X_{k} \simeq M /\mfrak^{k} M$. In particular, $E^{\vee}_{*} X_{k}$ is degreewise finite and so $X_{k}$ is a finite $K$-local spectrum of type $n$ by  \cite[8.5]{hovey1999morava}.

As $\beta$ is a functor, we have maps $X_{k} \rightarrow X_{k-1}$ induced from the projections $M / \mfrak^{k} \rightarrow M / \mfrak^{k-1}$, well-defined up to homotopy, and we let $X := \varprojlim X_{k}$ denote the corresponding homotopy limit. Here, by the latter we mean that we pick a lift of the tower of $X_{k}$ to the $\infty$-category $\spectra_{K}$ and we compute the limit there. It is classical that up to equivalence the homotopy limit does not depend on the choice of that lift, since it can be defined using the triangulated structure alone. 

We first show that $X$ is invertible. Since $X_{k}$ are dualizable, we have $X \simeq D(\varinjlim DX_{k})$, where $D := F(-, S^{0}_{K})$ is the $K$-local Spanier-Whitehead dual and the colimit is the $K$-local one. Thus, it is enough to show that $\varinjlim DX_{k}$ is invertible; which we will verify by showing that
\[
K_{*} (\varinjlim DX_{k}) \simeq \varinjlim K_{*} (DX_{k}) \simeq K_{*}.
\]
As the universal coefficient spectral sequences of (\ref{equation:universal_coefficient_of_spectral_sequence}) have all the same horizontal vanishing line, by taking filtered colimits we obtain a strongly convergent spectral sequence of signature
\[
\varinjlim \Ext_{E_{*}}(M / \mfrak^{k} M, K_{*}) \Rightarrow K_{*} (\varinjlim DX_{k}) 
\]
Since $M$ is a free $E_{*}$-module of rank one, the needed statement follows from \cref{lemma:local_homology_of_residue_field}, and we deduce that $\varinjlim DX_{k}$, hence $X$, is invertible. 

Since $X_{k}$ and $X$ are $K$-locally dualizable, $L_{K}(E \wedge X) \simeq \varprojlim L_{K}(E \wedge X_{k})$ by \cref{lemma:k_local_limits_of_finite_spectra_stable_under_smashing}. After passing to homotopy groups, we obtain the Milnor exact sequence 
 
 \begin{center}
 $0 \rightarrow \varprojlim^{1} (M / \mfrak^{k}M)[-1] \rightarrow E_{*}^{\vee}X \rightarrow \varprojlim M / \mfrak^{k} M \rightarrow 0$
 \end{center}
 and since $M$ is free of rank one, the $\varprojlim^{1}$-term vanishes. We deduce that the second map must be an isomorphism, which ends the proof since $M \simeq \varprojlim M / \mfrak^{k} M$.
\end{proof}

\begin{rem}
\label{rem:collapse_of_descent_spectral_sequence_at_large_primes}
The following alternative argument, based on the descent spectral sequence, was pointed to us by Paul Goerss. The needed spectral sequence was not known at the time this article first appeared, but it has been since then constructed in the work of Heard \cite[\S 6C]{heard2021spkn}, giving an alternative proof of \cref{thm:picard_groups_of_k_local_cat_algebraic_at_large_primes}.

If $\ccat$ is a presentably symmetric monoidal $\infty$-category, then the Picard group can be lifted to the Picard \emph{space}, which we will denote by $\mathcal{P}ic(\ccat)$ \cite{mathew2016picard}. The latter is the $\infty$-groupoid of invertible objects in $\ccat$; it is an $\mathbb{E}_{\infty}$-space with multiplication induced from the tensor product.

The Picard group itself can be recovered through the relation $\Pic(\ccat) = \pi_{0} \mathcal{P}ic(\ccat)$. The higher homotopy groups of the Picard space are easy to describe, as we have $\pi_{t} \Picspace(\ccat) \simeq \pi_{t-1}\textnormal{aut}_{\ccat}(\monunit, \monunit)$ for $t > 0$, where $\monunit$ is the monoidal unit and $\textnormal{aut}_{\ccat}$ denotes the space of self-equivalences. 

By the work of Devinatz and Hopkins, the map $S^{0}_{K} \rightarrow E$ of commutative ring spectra is a $K(n)$-local pro-Galois extension in the sense of Rognes with Galois group $\mathbb{G}_{n}$ \cite{devinatz2004homotopy}, \cite{rognes2005galois}. In \cite{heard2021spkn}, Heard proves that associated to this extension we have a spectral sequence of signature\footnote{To be more precise, Heard does not identify all of the $E_{2}$-term with continuous group cohomology, but he identifies it in the range large enough ($(s, t) = (0, 0), (1, 1)$ or $s \geq 2$) for our argument to go through.}
\[
H^{s}_{c}(\mathbb{G}_{n}, \pi_{t} \Picspace (\Mod_{E})) \Rightarrow \pi_{t-s} \Picspace(\spectra_{K}),
\]
with differentials $d_{r}$ of degree $(r, r-1)$ and where the action of $\mathbb{G}_{n}$ on $\Mod_{E}$ is induced from that on $E$. This extends the previous work in the case of the finite Galois group \cite{mathew2016picard}, \cite{gepner_lawson_2021}. 

To get hold on the $E_{2}$-term, we need to understand the homotopy of the Picard space of $\Mod_{E}$, but this is not difficult. Since $E$ is even periodic and $E_{0}$ is regular local, any invertible $E$-module is free and so $\pi_{0} \Picspace (\Mod_{E}) \simeq \mathbb{Z}/2$ \cite{baker2005invertible}. Moreover, because $E$ is the monoidal unit of $\Mod_{E}$, we have $\pi_{1} \Picspace(\Mod_{E}) \simeq E_{0}^{\times}$ and $\pi_{t} \Picspace(\Mod_{E}) \simeq E_{t-1}$ for $t \geq 2$. 

If $(p-1) \nmid n$, then the Morava stabilizer group is of finite homological dimension $n^{2}$ and the $E_{2}$-term has a horizontal vanishing line. Furthermore, by standard considerations $H^{s}(\mathbb{G}_{n}, E_{t})$ vanishes unless $t$ is divisible by $2p-2$ \cite[4.2.1]{heard2015morava}. 

It follows that if $2p-2 \geq n^{2}$ and $(p-1) \nmid n$, then if drawn using the Adams grading, the $-1 \leq t-s \leq 1$ region of the above spectral sequence looks like 

\begin{center}
	\begin{tikzpicture}
		\node (TL) at (0, 1) {$ H_{c}^{1}(\mathbb{G}_{n}, \mathbb{Z}/2) $};
		\node (BL) at (0, 0) {$ 0 $}; 
		\node (TM) at (2.2, 1) {$  H_{c}^{1}(\mathbb{G}_{n}, E_{0}^{\times}) $};
		\node (BM) at (2.2, 0) {$ H_{c}^{0}(\mathbb{G}_{n}, \mathbb{Z}/2) $};
		\node (TR) at (4.4, 1) {$ 0 $};
		\node (BR) at (4.4, 0) {$ H_{c}^{0}(\mathbb{G}_{n}, E_{0}^{\times}) $};

		\node (ArrowStart) at (1.2, -0.8) {$ $};
		\node (ArrowEnd) at (3.1, -0.8) {$ $};
		
		\draw [->] (ArrowStart) to node[auto] {$t-s$} (ArrowEnd);
	\end{tikzpicture},
\end{center}
with only zeroes above. We deduce that in this range this spectral sequence collapses and yields a short exact sequence 

\begin{center}
$0 \rightarrow H_{c}^{1}(\mathbb{G}_{n}, E_{0}^{\times}) \rightarrow \pi_{0} \Picspace(\spectra_{K}) \rightarrow \mathbb{Z}/2 \rightarrow 0$. 
\end{center}
This means that the topological Picard group $\Pic(\spectra_{K}) \simeq \pi_{0} \Picspace(\spectra_{K})$ fits into a short exact sequence of the same form as the algebraic one, as explained in the introduction. One can then verify that $\Pic(\spectra_{K}) \rightarrow \Pic(E_{*}^{\vee} E)$ fits into a map of short exact sequences which is then an isomorphism by the five-lemma, giving a different proof of \cref{thm:picard_groups_of_k_local_cat_algebraic_at_large_primes}. 

In fact, the bound obtained in this way is slightly sharper, as one only needs $2p-2 \geq n^{2}$, rather than $2p-2 > n^{2}+n$. This comes from the fact that this argument avoids the use of the $E$-local category, since the homological dimension of $E_{*}E$ is $n^{2}+n$, while the homological dimension of $\mathbb{G}_{n}$ is just $n^{2}$. 

\end{rem}

\bibliographystyle{amsalpha}
\bibliography{chromatic_picard_groups_bibliography}

\end{document}